\documentclass[11pt]{article}

\usepackage{booktabs}
\usepackage{fullpage}

\usepackage[round]{natbib}

\usepackage{amsfonts}
\usepackage{amsmath}
\usepackage{amssymb}
\usepackage[colorlinks = true, pdfstartview = FitV, linkcolor = blue, citecolor = blue, urlcolor = blue]{hyperref}
\usepackage{enumerate}
\usepackage{enumitem}

\usepackage{graphicx}

\usepackage[framed, amsthm, hyperref]{ntheorem}
\usepackage[framemethod=TikZ]{mdframed}

\usepackage[capitalise]{cleveref}
\crefname{equation}{}{}
\crefname{figure}{Figure}{Figures}
\creflabelformat{equation}{\textup{(#2#1#3)}}
\crefname{assumption}{Assumption}{Assumptions}
\crefname{condition}{Condition}{Conditions}

\input{formatting/newcommands.tex}
\newmdtheoremenv[%
	linewidth = 1pt,%
	roundcorner = 10pt,%
	leftmargin = 0,%
	rightmargin = 0,%
	backgroundcolor = white,%
	outerlinecolor = black,%
	splittopskip = \topskip,%
	ntheorem = true,%
]{theorem}{Theorem}

\newmdtheoremenv[%
	linewidth = 1pt,%
	roundcorner = 10pt,%
	leftmargin = 0,%
	rightmargin = 0,%
	backgroundcolor = white,%
	outerlinecolor = black,%
	splittopskip = \topskip,%
	ntheorem = true,%
]{corollary}{Corollary}

\newmdtheoremenv[%
	linewidth = 1pt,%
	roundcorner = 10pt,%
	leftmargin = 0,%
	rightmargin = 0,%
	backgroundcolor = green!3,%
	outerlinecolor = blue!70!black,%
	splittopskip = \topskip,%
	ntheorem = true,%
]{lemma}{Lemma}

\newmdtheoremenv[%
	linewidth = 1pt,%
	roundcorner = 10pt,%
	leftmargin = 0,%
	rightmargin = 0,%
	backgroundcolor = blue!3,%
	outerlinecolor = blue!70!black,%
	splittopskip = \topskip,%
	ntheorem = true,%
]{definition}{Definition}

\newmdtheoremenv[%
	linewidth = 1pt,%
	roundcorner = 10pt,%
	leftmargin = 0,%
	rightmargin = 0,%
	backgroundcolor = green!3,%
	outerlinecolor = blue!70!black,%
	splittopskip = \topskip,%
	ntheorem = true,%
]{proposition}{Proposition}

\newmdtheoremenv[%
	linewidth = 1pt,%
	roundcorner = 10pt,%
	leftmargin = 0,%
	rightmargin = 0,%
	backgroundcolor = green!3,%
	outerlinecolor = blue!70!black,%
	splittopskip = \topskip,%
	ntheorem = true,%
]{condition}{Condition}

\newmdtheoremenv[%
	linewidth = 1pt,%
	roundcorner = 10pt,%
	leftmargin = 0,%
	rightmargin = 0,%
	backgroundcolor = white,%
	outerlinecolor = black,%
	splittopskip = \topskip,%
	ntheorem = true,%
]{assumption}{Assumption}

\theoremstyle{definition}
\newmdtheoremenv[%
linewidth = 1pt,%
roundcorner = 10pt,%
leftmargin = 0,%
rightmargin = 0,%
backgroundcolor = cyan!3,%
outerlinecolor = blue!70!black,%
splittopskip = \topskip,%
ntheorem = true,%
]{example}{Example}

\theoremstyle{definition}
\newmdtheoremenv[%
	linewidth = 1pt,%
	roundcorner = 10pt,%
	leftmargin = 0,%
	rightmargin = 0,%
	backgroundcolor = red!3,%
	outerlinecolor = blue!70!black,%
	splittopskip = \topskip,%
	ntheorem = true,%
]{remark}{Remark}

\theoremstyle{nonumberplain}
\newmdtheoremenv[%
	linewidth = 1pt,%
	roundcorner = 10pt,%
	leftmargin = 0,%
	rightmargin = 0,%
	backgroundcolor = white,%
	outerlinecolor = black,%
	splittopskip = \topskip,%
	ntheorem = true,%
]{informal}{Main Result}

\renewcommand\qedsymbol{$\blacksquare$}
\setlist[enumerate,1]{%
	leftmargin=*, wide=0em, noitemsep, nolistsep, label = {\bfseries \arabic*.}
}
\setlist[itemize,1]{%
	leftmargin=*, wide=0em, noitemsep, nolistsep
}

\begin{document}

\title{Invexifying Regularization of Non-Linear Least-Squares Problems}
\author{
    Rixon Crane\footnote{School of Mathematics and Physics, University of Queensland, Australia. Email: r.crane@uq.edu.au} 
    \qquad 
    Fred Roosta\footnote{School of Mathematics and Physics, University of Queensland, Australia, and International Computer Science Institute, Berkeley, USA. Email: fred.roosta@uq.edu.au}
}
\maketitle

\begin{abstract}
We consider regularization of non-convex optimization problems involving a non-linear least-squares objective. By adding an auxiliary set of variables, we introduce a novel regularization framework whose corresponding objective function is not only provably invex, but it also satisfies the highly desirable Polyak--Lojasiewicz inequality for any choice of the regularization parameter. Although our novel framework is entirely different from the classical $\ell_2$-regularization, an interesting connection is established for the special case of under-determined linear least-squares. In particular, we show that gradient descent applied to our novel regularized formulation converges to the same solution as the linear ridge-regression problem. Numerical experiments corroborate our theoretical results and demonstrate the method’s performance in practical situations as compared to the typical $\ell_2$-regularization.
\end{abstract}

\section{Introduction}\label{section: introduction}

Consider the non-linear regression problem
\begin{align}\label{eq: fx}
	\min_{\bfx}\:\Bigl\{\fx\triangleq\hf\bigl\|\gx\bigr\|^2\Bigr\},
\end{align}
where $ \bfg : \Rd \rightarrow \Rn $ is a non-linear sufficiently smooth mapping.
Problems of this form are very common in the field of machine learning, where $ \bfg $ and $ \bfx $, respectively, represent the model to train/fit and its parameters.
For example, a nonlinear regression task involving a neural-network model 
$ \bfg $ with weights parameters $ \bfx $ and a least-squares loss 
\citep{goodfellow2016deep}. Here, each component of the vector valued function $ \bfg $ can correspond to an individual input from a training dataset of size $ n $.
Beyond machine learning, such problems arise frequently across many areas of 
science and engineering, e.g., PDE-constrained inverse problems 
\citep{rodoas1,rodoas2}. 
Historically, low-dimensional models ($ d\leq n $) have been used 
to great effect over a diverse range of practical settings 
\citep{bates2007nonlinear}.
In recent years, driven by modern machine-learning applications, more attention 
has been given to problems \eqref{eq: fx} in which $ f $ is high-dimensional 
($ d \gg 1 $) and even over-parameterized ($ d \gg n $).
Notable examples include generative models like auto-encoders, 
\citep{Chen_2019_ICCV}, and generative adversarial networks, 
\citep{Mao_2017_ICCV}. In all these cases, the training/fitting often involves some form of regularization of \cref{eq: fx}. In principle, there are two, often complementary, perspectives on regularization: one from the viewpoint of improving generalization and predictive performance, and the other with regards to improving the optimization landscape as a way to facilitate the training procedure.

\paragraph{Regularization to Improve Generalization.}
In all machine learning problems, avoiding the pitfalls of over-fitting and improving generalization performance constitute a major challenge in the training procedure \citep{mohri2018foundations,shalev2014understanding}. This is particularly important in over-parameterized settings or small-data learning tasks, where over-fitting can be a major hindrance in obtaining good out-of-sample predictive performance. 

In this light, various regularization techniques have been proposed to mitigate over-fitting in a variety of settings. These techniques range from the traditional ridge-type $\ell_{2}$-regularization, to more recent  techniques such as weight decay \citep{krogh1992simple,loshchilov2017decoupled}, dropout \citep{hinton2012improving,baldi2013understanding}, and various data augmentation techniques \citep{shorten2019survey}. One can broadly, and perhaps loosely, categorize these regularization techniques into two main groups: those that directly regularize the model to reduce its representational capacity, e.g.,  $\ell_{2}$-regularization, weight decay, and dropout, and those that instead aim to enhance the size and quality of training datasets, e.g., data augmentation. It is also typical to see these regularization techniques used in conjunction with one another.

\paragraph{Regularization to Facilitate Optimization.}
Unless $ \bfg $ is a linear map, the problem \cref{eq: fx} 
amounts to a \emph{non-convex} optimization problem.
In this light, the vast majority of optimization research has typically focused on 
developing (general purpose) optimization algorithms that, in the face of such 
non-convexity, come equipped with strong convergence guarantees, e.g., 
\cite{NEURIPS2019_b8002139, NEURIPS2019_d202ed5b, NEURIPS2019_d9fbed9d,
       NEURIPS2019_4d0b954f, pmlr-v97-yu19c,       pmlr-v97-haddadpour19a,xuNonconvexTheoretical2017, tripuraneni2017stochasticcubic,
       bellavia2020adaptive,       wang2019stochastic,
       blanchet2019convergence,    gupta2018shampoo,
       anil2020second,	yao2018inexact,	liu2019stability}.
However, most of these methods involve subtleties and disadvantages that can
make their use far less straightforward in many training procedures.
For example, complex, computationally intensive, and non-trivial steps in the 
algorithm and/or difficulty in fine-tuning the underlying hyper-parameters.

As a result, to improve the optimization landscape of \cref{eq: fx} and lessen the challenges faced by the optimization procedure, and hence to facilitate the training procedure, one can ``perturb'' the original problem 
\eqref{eq: fx} to a nearby problem, which exhibits more favorable structural properties, e.g., better condition number, ``rounder'' level-sets, better smoothness properties, etc. 
Perhaps, the simplest and the most well-known strategy for this purpose is the traditional $ \ell_2 $-regularization, which instead of \cref{eq: fx} considers the alternative objective $ \fx + \lambda \| \bfx \|^{2} $ for 
some constant $ \lambda > 0 $.
This technique attempts to improve upon the conditioning of the problem \cref{eq: fx} by ``smoothing out'' the highly non-convex regions of its landscape.

\paragraph{Complementary in Spirit, but Conflicting in Reality.}
Although, these two view-points on regularization are complementary in spirit, the underlying techniques employed can at times be at odds with each other. For example, to smooth out the optimization landscape, consider the $\ell_{2}$ regularization of \cref{eq: fx} as $ \fx + \lambda \| \bfx \|^{2} $. Although this new objective might have better structural properties than the original non-regularized problem, e.g., better conditioning and ``rounder'' level-sets, it can still be non-convex and challenging to optimize. In fact, it will only be convex for sufficiently large values of $\lambda$, and the lower-bound of such values relies on practically unknowable constants related to the spectrum of the Hessian of $ f $.
However, for such large values, the benefits offered by the convexity of the regularized function comes at a great cost: the obtained solution will most likely be far too biased to be relevant to the original
problem or any out-of-sample generalizations \citep{golatkar2019time}. In this light, a regularization technique that can offer the best of both worlds is highly desirable. Our aim in this paper is to propose one such regularization method that not only can greatly improve generalization performance, but it also provably offers a structurally easier model to train/fit.

\subsection{Our Approach and Contributions}
Our novel regularization framework involves adding an auxiliary set of variables, and in essence ``lifting'' the original optimization problem to higher dimensions. The additional set of variables are directly coupled with the output of the non-linear mapping $ \bfg $. 
In this light, not only does our framework regularize the model itself, but it also non-trivially and adaptively interacts with the input training data throughout training. 
We show that the new regularized problem, though still non-convex, will enjoy several highly-desirable properties, which can greatly facilitate the training procedure.
Remarkably, these properties hold regardless of the non-linear mapping,  $ \bfg $, and the regularization parameter.

Assuming $ \bfg $ is differentiable, we recall that analyzing the Jacobian of $ \bfg $, denoted by $ \Jg $, 
gives insights to the optimization landscape of \eqref{eq: fx}, e.g.,
\citet{zhang2018three,golatkar2019time,li2018visualizing}.
In general, this Jacobian may be rank-deficient over sets of points in $ \Rd $.
In such situations, the optimization landscape may exhibit a high degree of
non-convexity.
The key insight is that when $ \Jg $ has full row-rank, the structure of 
\eqref{eq: fx} guarantees valuable properties of $ f $, which we shall soon
detail.
To enforce this situation, we introduce an auxiliary variable $ \bfp\in\Rn $
and consider the new optimization problem
\begin{align}\label{eq: fhxp}
	\min_{\bfx,\bfp}\:
	\Bigl\{\fhxp\triangleq\hf\bigl\|\gx+\lambda\bfp\bigr\|^2\Bigr\},
\end{align}
where $ \lambda>0 $ is a pre-selected constant.
Clearly, the the Jacobian of $ \gx+\lambda\bfp $, 
which is $ \begin{bmatrix} \Jgx & \lambda\bfI \end{bmatrix} $, 
is guaranteed to have full row-rank for any value of $ \lambda > 0 $. 
This will have profound, and highly desirable, implications on the loss landscape of $ \fh $. 
Specifically, we show that \eqref{eq: fhxp} induces the following properties, which are 
formalized in Section~\ref{section: theory}.

\paragraph{Contributions.}  Let us briefly highlight our contributions.
\begin{enumerate}[label = {\bfseries (\roman*)}]
	\smallskip \item 
	We first show that our novel regularization framework coincides, 
	in an intriguing manner, with the classical $\ell_{2}$-regularization, 
	for the special case when $ \bfg $ is an affine mapping.
	
	\smallskip \item 
	For arbitrarily non-linear $ \bfg $, we then show that the function $ \fh $, 
	in \eqref{eq: fhxp}, is \emph{invex for any value of $ \lambda $}	
	(Theorem~\ref{theorem: invex-pl}-\ref{theorem: invex}), 
	which is a pleasant ``middle ground'' between convexity and non-convexity. 
	
	\smallskip \item 
	More importantly, we show the function  $ \fh $ satisfies the highly desirable
	\emph{Polyak--{\L}ojasiewicz (PL) inequality for any value of $ \lambda $} 
	(Theorem~\ref{theorem: invex-pl}-\ref{theorem: pl}), which allows for exponentially 
	fast convergence of many optimization algorithms.
	
	\smallskip \item We finally study the empirical performance of our novel regularization framework on several challenging ML problems.
\end{enumerate}

\newpage

\begin{remark}
Note that $\min_{\bfx,\bfp}\; \fhxp = 0$, i.e., the new regularization amounts to an interpolating model.
In sharp contrast to the classical $ \ell_{2} $-regularization whose optimal 
value is almost always non-trivial, the interpolation property of 
\eqref{eq: fhxp} provides a significantly useful feature in practice.
Namely, it allows for monitoring convergence by simply inspecting the training 
loss. 
\end{remark}

To our knowledge, the reformulation \eqref{eq: fhxp} and the addition of 
auxiliary variable $ \bfp $ is novel.
However, our approach here, in some sense, can be loosely connected with those 
presented by \citet{liang2018adding,kawaguchi2020elimination} in which by 
adding one special neuron per output unit, the loss landscape is modified in a 
way that all sub-optimal local minima of the original problem are eliminated, 
i.e., one can recover the global optima of the original problem from the local 
minima of the modified problem.
However, although the approach by \citet{kawaguchi2020elimination} applies more 
generally beyond non-linear least squares, the loss landscape remains highly 
non-convex with potentially many local minima, saddle points, and local maxima. 
Whereas, the invexity of \eqref{eq: fhxp} implies that 
all its stationary points are global optima.

One may also see some similarities between our approach and that of 
Hamiltonian Monte Carlo methods (HMC), where the original sample space 
involving ``position'' variables, is lifted to include auxiliary ``momentum'' 
variables \citep{neal2011mcmc,betancourt2017conceptual}.
In this light, the sampling from the position-momentum phase-space in HMC 
resembles optimization of $ \fh $ over the augmented space 
$ (\bfx,\bfp) \in \real^{d + n} $.

\section{Theoretical Analysis}\label{section: theory}

Comparing \eqref{eq: fhxp} to the usual $ \ell_2 $-regularization, one could 
suggest that in the reformulation \eqref{eq: fhxp}, the data is being regularized instead of the parameters. 
This introduces highly non-trivial and adaptive interactions between the data and the variable $ \bfp $ throughout the training procedure. 
In this section, we aim to develop some theoretical insights into structural properties of 
the proposed regularization method \cref{eq: fhxp} and on the consequential effects to the training 
procedure. 

\subsection{Connection to \texorpdfstring{$ \ell_{2} $}{l2}-regularization}
To study properties of \cref{eq: fhxp}, it might be more insightful to start with the simplest possible case where $ \bfg $ is just an affine map. Clearly, this setting amounts to \cref{eq: fx} being simply an ordinary least-squares problem. 
Although the reformulation \cref{eq: fhxp} is completely different than the classical $\ell_{2}$-regularization, surprisingly, it turns out that when $ \bfg $ is affine these two regularization methods coincide. More specifically, in \cref{theorem: lls}, we show that gradient descent (GD) applied to \cref{eq: fhxp} converges to the unique solution of the classical ridge-regression problem.

\newpage
\begin{theorem}[Linear Least-Squares]\label{theorem: lls}
	Consider \eqref{eq: fx} with $ \gx = \bfA\bfx - \bfb $,
	such that $ \bfA \in \Rnd $ has full row-rank and  $ \bfb \in \Rn $,
	and then consider applying GD to \eqref{eq: fhxp},
	using a sufficiently small fixed step-size
	and starting from the origin $ (\xo,\po) = (\zero,\zero) $.
	We have
	\begin{enumerate}[label = {\bfseries (\roman*)}]
		\item The iterates $ \xt $ converge to the unique solution of the ridge-regression problem.
		Namely,
		\begin{align*}
			\lim_{t\rightarrow\infty} \xt
			=
			\bfx^{*}(\lambda)
			\triangleq
			\arg\min_{\bfx}
			\bigl\{ \| \bfA\bfx - \bfb \|^2 + \lambda \| \bfx \|^2 \bigr\}
			=
			( \bfA^{\transpose}\bfA + \lambda^2\bfI )^{-1} \bfA^{\transpose}\bfb.
		\end{align*}
		
		\item Furthermore, $ \bfx^{*}(\lambda) $ behaves consistently with
		the ridge-regression solution in the limiting cases of $ \lambda $.
		That is,
		\begin{align*}
			\lim_{\lambda\rightarrow0} \bfx^{*}(\lambda) =\bfA^{\dagger}\bfb,
			\:\:
			\lim_{\lambda\rightarrow\infty} \bfx^{*}(\lambda) = \zero,
			\:\:
			\lim_{\lambda\rightarrow0} f\bigl( \bfx^{*}(\lambda) \bigr) = 0,
			\:\:
			\lim_{\lambda\rightarrow\infty} f\bigl( \bfx^{*}(\lambda) \bigr) 
			= \| \bfb \|^2,
		\end{align*}
		where $ \bfA^{\dagger} $ denotes the Moore–Penrose inverse of $ \bfA $
		and $ f $ is as in \eqref{eq: fx}.
	\end{enumerate}
\end{theorem}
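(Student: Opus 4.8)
The plan is to track the \emph{residual} $ \bfr_t \triangleq \bfA\xt - \bfb + \lambda\pt $ rather than the pair $ (\xt,\pt) $ itself. This is the key device, because $ \fh $ is convex but \emph{not} strictly convex in $ (\bfx,\bfp) $ --- its minimizers form the entire affine set $ \{(\bfx,\bfp): \bfA\bfx + \lambda\bfp = \bfb\} $ --- so no direct contraction argument on the joint iterate will identify the limit; we genuinely need to exploit the initialization at the origin. Writing the GD step for \eqref{eq: fhxp} as $ \xtt = \xt - \alpha\bfA^{\transpose}\bfr_t $ and $ \ptt = \pt - \alpha\lambda\bfr_t $, a one-line computation gives the \emph{linear} recursion $ \bfr_{t+1} = (\bfI - \alpha\bfM)\bfr_t $ with $ \bfM \triangleq \bfA\bfA^{\transpose} + \lambda^2\bfI \succ 0 $ (positivity needs only $ \lambda > 0 $), and $ \bfr_0 = -\bfb $. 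Choosing the fixed step-size small enough that $ \|\bfI - \alpha\bfM\| < 1 $, i.e. any $ \alpha < 2/\lambda_{\max}(\bfM) $, makes $ \bfr_t = (\bfI - \alpha\bfM)^t\bfr_0 \to \zero $ geometrically.

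Next I would recover $ \lim_t \xt $ by telescoping $ \xt = \xo - \alpha\bfA^{\transpose}\sum_{k=0}^{t-1}\bfr_k $ and summing the resulting matrix geometric series, $ \sum_{k=0}^{t-1}(\bfI - \alpha\bfM)^k = (\alpha\bfM)^{-1}\bigl(\bfI - (\bfI - \alpha\bfM)^t\bigr) \to (\alpha\bfM)^{-1} $, which yields $ \lim_{t\to\infty}\xt = \bfA^{\transpose}(\bfA\bfA^{\transpose} + \lambda^2\bfI)^{-1}\bfb $. To finish part (i) it remains to recognize this as the ridge-regression solution: the push-through identity $ \bfA^{\transpose}(\bfA\bfA^{\transpose} + \lambda^2\bfI)^{-1} = (\bfA^{\transpose}\bfA + \lambda^2\bfI)^{-1}\bfA^{\transpose} $, which is immediate from $ (\bfA^{\transpose}\bfA + \lambda^2\bfI)\bfA^{\transpose} = \bfA^{\transpose}(\bfA\bfA^{\transpose} + \lambda^2\bfI) $, combined with the textbook normal-equations formula for $ \arg\min_\bfx\{\|\bfA\bfx - \bfb\|^2 + \lambda^2\|\bfx\|^2\} $, gives exactly $ \bfx^*(\lambda) $.

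For part (ii) I would note that $ \lambda\mapsto\bfx^*(\lambda) = \bfA^{\transpose}(\bfA\bfA^{\transpose} + \lambda^2\bfI)^{-1}\bfb $ is continuous on $ [0,\infty) $: since $ \bfA $ has full row-rank, $ \bfA\bfA^{\transpose} $ is invertible, so the resolvent has no singularity as $ \lambda\to 0 $, giving $ \lim_{\lambda\to 0}\bfx^*(\lambda) = \bfA^{\transpose}(\bfA\bfA^{\transpose})^{-1}\bfb = \bfA^{\dagger}\bfb $ (the Moore--Penrose formula for a full-row-rank matrix); while $ \|(\bfA^{\transpose}\bfA + \lambda^2\bfI)^{-1}\| \le \lambda^{-2}\to 0 $ forces $ \lim_{\lambda\to\infty}\bfx^*(\lambda) = \zero $. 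The two limits for $ f $ then follow by continuity of $ \bfx \mapsto \fx $: at $ \lambda\to 0 $ one uses $ \bfA\bfA^{\dagger} = \bfI_n $ (again full row-rank) so that $ f(\bfA^{\dagger}\bfb) = 0 $, and at $ \lambda\to\infty $ one simply evaluates $ f(\zero) = \hf\|\bfb\|^2 $.

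The only genuine obstacle is the one flagged at the start: because the regularized objective is not strongly convex in $ (\bfx,\bfp) $, convergence of $ \xt $ --- and, crucially, the precise limit point --- must be extracted from the residual recursion together with the geometric-series summation and the origin initialization, rather than from a contraction or strong-convexity argument on the joint iterate. Everything after that reduction (the recursion, the summation, the push-through identity, and the limiting-case computations) is routine.
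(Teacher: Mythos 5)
Your proof is correct, but it takes a genuinely different route from the paper's. The paper reduces \eqref{eq: fhxp} to an ordinary under-determined least-squares problem with the full row-rank coefficient matrix $\begin{bmatrix}\bfA & \lambda\bfI\end{bmatrix}$, invokes the known implicit-bias result that GD from the origin converges to the minimum-norm solution, and then reads off $\bfx^{*}(\lambda)$ from the explicit pseudo-inverse $\begin{bmatrix}\bfA & \lambda\bfI\end{bmatrix}^{\dagger}\bfb$. You instead give a self-contained elementary argument: the residual $\bfr_t = \bfA\xt - \bfb + \lambda\pt$ obeys the linear recursion $\bfr_{t+1} = (\bfI - \alpha(\bfA\bfA^{\transpose}+\lambda^2\bfI))\bfr_t$, and telescoping plus the matrix geometric series identifies the limit of $\xt$ directly. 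What your approach buys is an explicit admissible step-size range $\alpha < 2/\lambda_{\max}(\bfA\bfA^{\transpose}+\lambda^2\bfI)$, an explicit geometric rate, and independence from the cited implicit-bias theorem (which you effectively re-derive in this special case via the observation that $\xt$ stays in the range of $\bfA^{\transpose}$); what the paper's route buys is brevity and a cleaner conceptual statement (minimum-norm solution of a lifted linear system). One incidental benefit of your computation: it makes visible two typos in the theorem statement --- the ridge objective should carry the penalty $\lambda^{2}\|\bfx\|^{2}$ (not $\lambda\|\bfx\|^{2}$) to match the closed form $(\bfA^{\transpose}\bfA+\lambda^{2}\bfI)^{-1}\bfA^{\transpose}\bfb$, and with $f(\bfx)=\hf\|\bfA\bfx-\bfb\|^{2}$ the limit as $\lambda\to\infty$ is $\hf\|\bfb\|^{2}$ rather than $\|\bfb\|^{2}$; your normal-equations step and your evaluation $f(\zero)=\hf\|\bfb\|^{2}$ are the consistent versions.
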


\begin{proof}
    Here, our regularized function \eqref{eq: fhxp} is given by
    \begin{align*}
        \fhxp
        &=
        \hf\|\bfA\bfx-\bfb+\lambda\bfp\|^2
        =
        \hf
        \biggl\|
        \begin{bmatrix} \bfA & \lambda\bfI \end{bmatrix}
        \begin{bmatrix} \bfx \\ \bfp \end{bmatrix}
        - \bfb
        \biggr\|^{2}.
    \end{align*}
    This involves a full row-rank coefficient matrix.
    Therefore, using a sufficiently small fixed step-size and starting from the 
    origin $ (\xo,\po) = (\zero,\zero) $, it is a well known result that 
    the iterates from GD applied to this function will converge to the 
    minimum-norm solution; for example, refer to \cite{JMLR:v19:18-188}.
    The minimum-norm solution is
    \begin{align*}
        \begin{bmatrix} \bfx^{*}(\lambda) \\ \bfp^{*}(\lambda) \end{bmatrix}
        =
        \begin{bmatrix} \bfA & \lambda\bfI \end{bmatrix}^{\dagger}\bfb
        =
        \begin{bmatrix} \bfA^{\transpose} \\ \lambda\bfI \end{bmatrix}
        ( \bfA\bfA^{\transpose} + \lambda^2\bfI )^{-1} \bfb
        =
        \begin{bmatrix}
            ( \bfA^{\transpose}\bfA + \lambda^2\bfI )^{-1} \bfA^{\transpose}\bfb \\
            \lambda ( \bfA\bfA^{\transpose} + \lambda^2\bfI )^{-1} \bfb
        \end{bmatrix}.
    \end{align*}
    Clearly, 
    $ \lim_{\lambda\rightarrow0} \bfx^{*}(\lambda) = \bfA^{\dagger}\bfb $ and 
    $ \lim_{\lambda\rightarrow\infty} \bfx^{*}(\lambda) = \zero $.

    Since GD will converge to a global optimum of $ \fh $, we have
    $\bfp^{*}(\lambda) = \bigl( \bfb - \bfA\bfx^{*}(\lambda) \bigr) / \lambda $.
    Together with the property that all iterates $ \xt $ are in the range of 
    $ \bfA^{\transpose} $ by definition of GD, we have
    \begin{align*}
        \bfx^{*}(\lambda)
        =
        \bfA^{\transpose} ( \bfA\bfA^{\transpose} )^{-1}
        \bigl( \bfb - \lambda\bfp^{*}(\lambda) \bigr).
    \end{align*}
    Therefore,
    \begin{align*}
        f\bigl( \bfx^{*}(\lambda) \bigr)
        =
        \hf
        \Bigl\|
            \bfA\bfA^{\transpose} ( \bfA\bfA^{\transpose} )^{-1}
            \bigl( \bfb - \lambda\bfp^{*}(\lambda) \bigr)
            - \bfb
        \Bigr\|^{2}
        =
        \hf \bigl\| \lambda\bfp^{*}(\lambda) \bigr\|^{2}
        =
        \hf 
        \bigl\| 
            \lambda^{2} ( \bfA\bfA^{\transpose} + \lambda^2\bfI )^{-1} \bfb 
        \bigr\|^{2},
    \end{align*}
    with
    $ \lim_{\lambda\rightarrow0} f\bigl( \bfx^{*}(\lambda) \bigr) = 0 $,
    $ \lim_{\lambda\rightarrow\infty} f\bigl( \bfx^{*}(\lambda) \bigr) 
    = \| \bfb \|^2 $.
    \qedsymbol
\end{proof}

\subsection{General Non-linear Mapping \texorpdfstring{$ \bfg $}{g}}
As mentioned in Section~\ref{section: introduction}, a fundamental distinction 
between our proposed formulation and $ \ell_{2}$-regularization becomes apparent 
when moving beyond the simple linear least-squares case of 
Theorem~\ref{theorem: lls}.
This difference mainly lies in the fact that our new objective function retains fundamental aspects 
of its structure, \emph{regardless} of the choice of $ \lambda > 0 $ or the function $ \bfg $.
In particular, the optimization landscape of the typical 
$ \ell_{2} $-regularized function, given by $ \fx + \lambda \|\bfx\|^2 $, 
can range from convex to highly non-convex, all depending on the non-trivial, 
and often unknown, interplay between curvature of $ f $ and the choice of 
$ \lambda $. 
Whereas, the optimization landscape induced from our regularization in \eqref{eq: fhxp} retains its highly desirable structural properties of invexity and PL property, for any $\lambda > 0$.

Invexity was introduced to extend the sufficiency of the first-order optimality condition beyond simple convex programming \citep{mishra2008invexity,cambini2008generalized}.
As a result, a differentiable function, for example, is invex if and only if 
all its critical points are global minima.
PL inequality \citep{10.1007/978-3-319-46128-1_50}, in fact, characterizes a special class of invex functions for which many optimization algorithms can be shown to converge exponentially fast.
In its introductory paper, \citet{Polyak1963} showed that GD enjoys a global 
linear convergence-rate under this condition.
In recent years, it has garnered increased attention from the machine-learning 
community.
For example, it has been at the heart of many convergence-proofs outside the
limitations of strong-convexity, e.g.,
\citep{10.1007/978-3-319-46128-1_50,bassily2018exponential,pmlr-v89-vaswani19a,gower2021sgd,yuan2018stagewise,ajalloeian2020analysis}.
This inequality also has connections to other topics of interest, including over-parametrization and interpolation \citep{pmlr-v89-vaswani19a}.
For example, it has been shown that sufficiently-wide and over-parameterized 
neural networks, under certain assumptions, induce the PL inequality 
\citep{liu2020toward}.

\begin{theorem}[Invexity and PL Inequality]\label{theorem: invex-pl}
	Suppose $ \bfg $ is differentiable and let $ \lambda > 0 $. The function $ \fh $ in \eqref{eq: fhxp} has the following properties.
	\begin{enumerate}[label = {\bfseries (\roman*)}]
		\item \label{theorem: invex} \textbf{(Invexity)} All stationary points are global minima.
		In particular, for all $ \bfx,\bfy \in \Rd $ and $ \bfp,\bfq \in \Rn $, we have
		\begin{align*}
			\fhxp - \fhyq
			&\geq
			\bigl\langle \etaxpyq, \nabla\fhyq \bigr\rangle,
		\end{align*}
		where 
		\begin{align*}
			\etaxpyq =
		    \begin{bmatrix} \Jgy  \lambda\bfI \end{bmatrix}^{\dagger}
			\Bigl( 
			    \bigl( \gx + \lambda\bfp \bigr) 
			    - \bigl( \gy + \lambda\bfq \bigr) 
			\Bigr).
		\end{align*}
		
		\item \label{theorem: pl} \textbf{(Polyak--Łojasiewicz Inequality)}
		For all $ \bfx \in \Rd $ and $ \bfp \in \Rn $,  we have
		\begin{align*}
			\fhxp \leq \frac{1}{2\lambda^2} \bigl\| \nabla\fhxp \bigr\|^2.
		\end{align*}
	\end{enumerate}
\end{theorem}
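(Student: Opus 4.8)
The plan is to phrase everything in terms of the residual vector $\bfr(\bfx,\bfp) \triangleq \gx + \lambda\bfp \in \Rn$, so that $\fhxp = \hf\|\bfr(\bfx,\bfp)\|^2$. Since the Jacobian of the map $(\bfx,\bfp)\mapsto\bfr(\bfx,\bfp)$ is $\bfB(\bfx) \triangleq \begin{bmatrix}\Jgx & \lambda\bfI\end{bmatrix}$, the chain rule gives $\nabla\fhxp = \bfB(\bfx)^{\transpose}\bfr(\bfx,\bfp)$; componentwise, $\nabla_{\bfx}\fhxp = \Jtgx(\gx+\lambda\bfp)$ and $\nabla_{\bfp}\fhxp = \lambda(\gx+\lambda\bfp)$. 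Note that $\bfB(\bfx)$ has full row rank for every $\bfx$ and every $\lambda>0$, thanks to the $\lambda\bfI$ block.

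For the PL inequality \ref{theorem: pl}, I would simply bound $\|\nabla\fhxp\|^2 = \|\Jtgx(\gx+\lambda\bfp)\|^2 + \lambda^2\|\gx+\lambda\bfp\|^2 \ge \lambda^2\|\gx+\lambda\bfp\|^2 = 2\lambda^2\fhxp$; the only content is discarding the first, non-negative, summand.

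For the invexity inequality \ref{theorem: invex}, I would start from the completion-of-squares identity $\hf\|\bfa\|^2 - \hf\|\bfb\|^2 = \langle\bfb,\bfa-\bfb\rangle + \hf\|\bfa-\bfb\|^2 \ge \langle\bfb,\bfa-\bfb\rangle$, applied with $\bfa = \bfr(\bfx,\bfp)$ and $\bfb = \bfr(\bfy,\bfq)$, to obtain $\fhxp - \fhyq \ge \langle\bfr(\bfy,\bfq),\,\bfr(\bfx,\bfp)-\bfr(\bfy,\bfq)\rangle$. It then remains to check that this lower bound equals $\langle\etaxpyq,\nabla\fhyq\rangle$: writing $\bfB \triangleq \bfB(\bfy) = \begin{bmatrix}\Jgy & \lambda\bfI\end{bmatrix}$, we have $\nabla\fhyq = \bfB^{\transpose}\bfr(\bfy,\bfq)$ and $\etaxpyq = \bfB^{\dagger}\bigl(\bfr(\bfx,\bfp)-\bfr(\bfy,\bfq)\bigr)$, so $\langle\etaxpyq,\nabla\fhyq\rangle = \bigl\langle\bfB^{\dagger}(\bfr(\bfx,\bfp)-\bfr(\bfy,\bfq)),\,\bfB^{\transpose}\bfr(\bfy,\bfq)\bigr\rangle = \bigl\langle\bfB\bfB^{\dagger}(\bfr(\bfx,\bfp)-\bfr(\bfy,\bfq)),\,\bfr(\bfy,\bfq)\bigr\rangle$, and since $\bfB$ has full row rank, $\bfB\bfB^{\dagger} = \bfI$, which makes this exactly the inner product above. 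Finally, for any stationary point $(\bfy,\bfq)$ we have $\nabla\fhyq = 0$, so the inequality yields $\fhxp \ge \fhyq$ for all $(\bfx,\bfp)$, i.e. $(\bfy,\bfq)$ is a global minimum.

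The main (and essentially only) subtlety is the pseudoinverse identity $\bfB\bfB^{\dagger} = \bfI$, which requires $\bfB$ to have full row rank; this holds uniformly in $\bfx$ and $\lambda$ precisely because of the augmenting $\lambda\bfI$ block, and is the structural reason both properties are insensitive to $\bfg$ and to $\lambda$. The remaining pieces — the two-line gradient computation and the completion-of-squares step — are routine.
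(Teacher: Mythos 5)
Your proof is correct. Part \ref{theorem: invex} follows essentially the same route as the paper: the paper invokes ``convexity of the $\ell_2$-norm squared,'' which is exactly your completion-of-squares inequality, and then uses the same observation that $\begin{bmatrix} \Jgy & \lambda\bfI \end{bmatrix}$ has full row rank so its pseudo-inverse is a right inverse, making the two lower bounds identical. Part \ref{theorem: pl} is where you genuinely diverge: the paper inserts $\bigl(\begin{bmatrix} \Jgx & \lambda\bfI \end{bmatrix}\begin{bmatrix} \Jgx & \lambda\bfI \end{bmatrix}^{\dagger}\bigr)^{\transpose}$ in place of $\bfI$, rewrites $\fhxp$ as $\hf\bigl\|\bigl(\begin{bmatrix} \Jgx & \lambda\bfI \end{bmatrix}^{\dagger}\bigr)^{\transpose}\nabla\fhxp\bigr\|^2$, and then bounds the operator norm of the transposed pseudo-inverse via the smallest non-zero eigenvalue of $\begin{bmatrix} \Jgx & \lambda\bfI \end{bmatrix}^{\transpose}\begin{bmatrix} \Jgx & \lambda\bfI \end{bmatrix}$, which is at least $\lambda^2$. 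Your version simply splits $\|\nabla\fhxp\|^2$ into the $\bfx$-block $\|\Jtgx(\gx+\lambda\bfp)\|^2$ and the $\bfp$-block $\lambda^2\|\gx+\lambda\bfp\|^2$ and discards the non-negative first term. The two arguments encode the same fact (the $\lambda\bfI$ block forces the smallest singular value of the augmented Jacobian to be at least $\lambda$), but yours avoids the pseudo-inverse entirely for this part and is the more elementary and self-contained of the two; the paper's phrasing has the mild advantage of making the PL constant visibly a spectral quantity of the augmented Jacobian, which generalizes to settings where the lower bound on the singular values comes from something other than an explicit identity block.
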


\begin{proof}
    \hfill
	\begin{enumerate}[label = {\bfseries (\roman*)}]
	    \item \textbf{(Invexity)} 
	    Consider the Jacobian of $ \gx + \lambda \bfp $, which is
        $ \begin{bmatrix} \Jgx & \lambda\bfI \end{bmatrix} $.
        As it is guaranteed to have full row-rank,
        its pseudo-inverse acts as a right inverse.
        Together with the convexity of the $\ell_{2}$-norm squared, 
        we have, for all $ \bfx,\bfy \in \Rd $ and $ \bfp,\bfq \in \Rn $,
        \begin{align*}
            \fhxp - \fhyq
            &=
            \hf \bigl\| \gx + \lambda \bfp \bigr\|^{2}
            - \hf \bigl\| \gy + \lambda \bfq \bigr\|^{2}
            \\ &\geq
            \Bigl\langle
            \bigl( \gx + \lambda\bfp \bigr) - \bigl( \gy + \lambda\bfq \bigr),
            \bigl( \gy + \lambda\bfq \bigr)
            \Bigr\rangle
            \\ &=
            \bigl\langle \etaxpyq, \nabla\fhyq \bigr\rangle,
        \end{align*}

	    \item \textbf{(Polyak--Łojasiewicz Inequality)}
	    Replacing $ \bfI $ with
	    $ 
	    \Bigl(
	        \begin{bmatrix} \Jgx & \lambda\bfI \end{bmatrix}
	        \begin{bmatrix} \Jgx & \lambda\bfI \end{bmatrix}^{\dagger} 
	    \Bigr)^{\transpose}
	    $, we have
	    \begin{align*}
	        \fhxp 
	        &=
	        \hf \Bigl\| \bfI \bigl( \gx + \lambda \bfp \bigr) \Bigr\|^{2}
	        \\ &=
	        \hf
	        \biggl\|
	        \Bigl(
    	        \begin{bmatrix} \Jgx & \lambda\bfI \end{bmatrix}^{\dagger} 
    	    \Bigr)^{\transpose}
    	    \begin{bmatrix} \Jtgx \\ \lambda \bfI \end{bmatrix} 
    	    \bigl( \gx + \lambda \bfp \bigr)
    	    \biggr\|^{2}
    	    \\ &=
	        \hf
	        \biggl\|
	        \Bigl(
    	        \begin{bmatrix} \Jgx & \lambda\bfI \end{bmatrix}^{\dagger} 
    	    \Bigr)^{\transpose}
    	    \nabla \fhxp
    	    \biggr\|^{2}.
	    \end{align*}
	    Expanding and using that fact that the
	    smallest non-zero eigenvalue of
	    $ 
        \begin{bmatrix} \Jgx & \lambda\bfI \end{bmatrix}^{\transpose}
        \begin{bmatrix} \Jgx & \lambda\bfI \end{bmatrix}
	    $
	    is at least $ \lambda^{2} $, we obtain the PL inequality.     \qedsymbol
	\end{enumerate}
\end{proof}

So far, we have only made the basic assumption of $ \bfg $ being differentiable.
Yet, from this alone, Theorem~\ref{theorem: invex-pl} shows that our regularized function \eqref{eq: fhxp} caries important structural guarantees, unlike that of $ \ell_2 $-regularization.
It is commonplace in machine-learning literature to have assumptions on 
smoothness, i.e., on the \emph{Lipschitz continuity} of the gradient,
to obtain additional convergence guarantees for a given optimization algorithm.
In this vein, by additionally making a reasonable assumption about twice 
continuous differentiability of $ \bfg $ on a compact ball, in 
Assumption~\ref{assumption: twice diff g}, our regularized objective 
\eqref{eq: fhxp} is shown to be also locally Lipschitz-continuous on a compact ball.
This is formalized in Theorem~\ref{theorem: lipschitz}.

\vspace{2mm}

\begin{assumption}\label{assumption: twice diff g}
    Given $ \lambda > 0 $ and $ \xo \in \Rd $, assume the non-linear mapping $ \bfg $ is  twice-continuously differentiable on the compact ball $ \sD(\xo,\lambda) = \left\{ \bfx \mid \| \bfx - \xo \| \leq R(\xo,\lambda) \right\} $, where we have defined $ R(\xo,\lambda) = 2\sqrt{2 \Delta_{0}} / \lambda $ and $ \Delta_{0} = \fxo - \min_{\bfx} \fx $.
\end{assumption}

\begin{theorem}[Smoothness of $ \fhxp $]\label{theorem: lipschitz}
	Under Assumption~\ref{assumption: twice diff g},
    let $ \widehat{\sD}(\xo,\lambda) $ denote the compact ball
    around $ (\xo, \zero) $ of radius $ R(\xo,\lambda) $, i.e.,
    \begin{align*}
        \widehat{\sD}(\xo,\lambda)
        =
        \Biggl\{
            (\bfx, \bfp)
            \mid
            \biggl\| \begin{bmatrix} \bfx - \xo \\ \bfp \end{bmatrix} \biggr\|
            \leq
            R(\xo,\lambda)
        \Biggr\},
    \end{align*}
    where $ R(\xo,\lambda) $ is as in \cref{assumption: twice diff g}.
    The function $ \fh $ in \eqref{eq: fhxp} is $ L(\xo,\lambda) $-smooth
    on $ \widehat{\sD} $ for some constant $ L(\xo,\lambda) $ that depends on 
    $ \xo $ and $ \lambda $.
\end{theorem}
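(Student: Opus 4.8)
The plan is to use the fact that $\fhxp$ is twice continuously differentiable on the compact \emph{convex} set $\widehat{\sD}(\xo,\lambda)$ and to reduce $L$-smoothness to a uniform bound on the operator norm of its Hessian over that set. Concretely, write $\fhxp = \hf\|\bfr(\bfx,\bfp)\|^2$ with residual $\bfr(\bfx,\bfp) = \gx + \lambda\bfp$, whose Jacobian with respect to $(\bfx,\bfp)$ is the full row-rank matrix $\begin{bmatrix}\Jgx & \lambda\bfI\end{bmatrix}$. Differentiating twice gives the explicit Hessian
\begin{align*}
\nabla^2\fhxp
=
\begin{bmatrix}
\Jtgx\Jgx + \sum_{i=1}^{n} r_i(\bfx,\bfp)\,\nabla^2 g_i(\bfx) & \lambda\Jtgx \\[2pt]
\lambda\Jgx & \lambda^2\bfI
\end{bmatrix},
\end{align*}
where $g_i$ and $r_i$ denote the $i$-th components of $\bfg$ and $\bfr$.

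The next step is a domain-bookkeeping check. By \cref{assumption: twice diff g}, each $\nabla^2 g_i$ exists and is continuous on $\sD(\xo,\lambda)$; and since any $(\bfx,\bfp)\in\widehat{\sD}(\xo,\lambda)$ satisfies $\| \bfx - \xo \| \leq R(\xo,\lambda)$, the projection of $\widehat{\sD}(\xo,\lambda)$ onto the $\bfx$-coordinates lies inside $\sD(\xo,\lambda)$. As the $\bfp$-dependence of $\nabla^2\fhxp$ is polynomial, $\nabla^2\fhxp$ is a continuous matrix-valued function on the compact set $\widehat{\sD}(\xo,\lambda)$, hence bounded in operator norm there.

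Then I would invoke convexity of the ball $\widehat{\sD}(\xo,\lambda)$: along the segment joining any two of its points, the fundamental theorem of calculus gives $\|\nabla\fh(\bfx_1,\bfp_1) - \nabla\fh(\bfx_2,\bfp_2)\| \leq L(\xo,\lambda)\,\|(\bfx_1,\bfp_1) - (\bfx_2,\bfp_2)\|$ with $L(\xo,\lambda) = \sup_{\widehat{\sD}(\xo,\lambda)}\|\nabla^2\fhxp\|$, finite by the previous paragraph. To exhibit the constant concretely, I would bound the three continuous quantities $\sup\|\Jgx\|$ and $\max_i\sup\|\nabla^2 g_i(\bfx)\|$ over $\sD(\xo,\lambda)$, and $\sup\|\bfr(\bfx,\bfp)\|$ over $\widehat{\sD}(\xo,\lambda)$; the last I would control by the mean-value estimate $\|\gx\| \leq \|\gxo\| + (\sup\|\Jg\|)\,\|\bfx - \xo\|$ together with $\|\bfx - \xo\| \leq R(\xo,\lambda)$ and $\lambda\|\bfp\| \leq \lambda R(\xo,\lambda)$. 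Splitting the Hessian into its block-diagonal and off-diagonal parts, using $\|\Jtgx\Jgx\| = \|\Jgx\|^2$, $\|\sum_i r_i\nabla^2 g_i\| \leq \sqrt{n}\,\|\bfr\|\max_i\|\nabla^2 g_i\|$, and the fact that the symmetric off-diagonal block has operator norm $\lambda\|\Jgx\|$, then yields an explicit $L(\xo,\lambda)$ of the form $\max\bigl\{(\sup\|\Jg\|)^2 + \sqrt{n}\,(\sup\|\bfr\|)\max_i\sup\|\nabla^2 g_i\|,\; \lambda^2\bigr\} + \lambda\sup\|\Jg\|$.

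The computations are routine; the only points that need care are the domain bookkeeping above — verifying that evaluating $\nabla^2\fhxp$ on $\widehat{\sD}(\xo,\lambda)$ only ever requires $\bfg$ and its second derivatives at points of $\sD(\xo,\lambda)$, which is precisely why the radius of $\widehat{\sD}(\xo,\lambda)$ is taken equal to $R(\xo,\lambda)$ — and the decision of how explicitly to display the constant, since the statement only asserts existence of some $L(\xo,\lambda)$ depending on $\xo$ and $\lambda$.
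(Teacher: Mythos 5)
Your proposal is correct and follows essentially the same route as the paper: compute the Hessian of $\fh$ explicitly, observe that every $(\bfx,\bfp)\in\widehat{\sD}(\xo,\lambda)$ has $\bfx\in\sD(\xo,\lambda)$ so that \cref{assumption: twice diff g} yields uniform bounds on $\|\Jgx\|$, $\|\nabla^2 g_i(\bfx)\|$, and $\|\gx\|$ over the compact ball, and conclude a uniform operator-norm bound on $\nabla^2\fh$ (the paper groups the second-order term as $\lambda\nabla^2\langle\bfp,\gx\rangle + \sum_i g_i(\bfx)\nabla^2 g_i(\bfx)$ rather than your $\sum_i r_i\nabla^2 g_i$, which is the same quantity). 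Your explicit invocation of convexity of the ball and the fundamental theorem of calculus to pass from the Hessian bound to gradient Lipschitz continuity is a step the paper leaves implicit, but it is not a different argument.
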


\begin{proof}
    Consider the Hessian
    \begin{align*}
        \nabla^{2}\fhxp
        =
        \begin{bmatrix} \Jtgx \\ \lambda\bfI \end{bmatrix}
        \begin{bmatrix} \Jgx & \lambda\bfI \end{bmatrix}
        +
        \begin{bmatrix}
            \lambda \nabla^{2} \bigl\langle \bfp, \gx \bigr\rangle
            + \sum\bfg_{i}(\bfx)\nabla^{2}\bfg_{i}(\bfx)
            & \zero
            \\
            \zero & \zero
        \end{bmatrix}.
    \end{align*}
    Therefore, $ \bigl\| \nabla^{2}\fhxp \bigr\| $ is upper bounded by
    \begin{align*}
        \lambda^{2}
        + \bigl\| \Jgx \bigr\|^{2}
        + \bigl\| \nabla^{2}\gx \bigr\| \bigl\| \gx \bigr\|
        + \lambda \Bigl\| \nabla^{2} \bigl\langle \bfp, \gx \bigr\rangle \Bigl\|.
    \end{align*}
    \cref{assumption: twice diff g} implies
    there exist constants $ M(\xo,\lambda) < \infty$ and $N(\xo,\lambda) < \infty $,
    depending on $ \xo $ and $ \lambda $, such that $\bigl\| \gx - \gy \bigr\| \leq M(\xo,\lambda) \| \bfx - \bfy \|$ and $\bigl\| \Jgx - \Jgy \bigr\| \leq N(\xo,\lambda) \| \bfx - \bfy \|$, for all $ \bfx, \bfy \in \sD(\xo,\lambda) $ as defined in \cref{assumption: twice diff g}.
    Note that for any $ (\bfx, \bfp) \in \widehat{\sD}(\xo,\lambda) $, we have $ \bfx \in \sD(\xo,\lambda) $.
    Therefore,
    \begin{align*}
        \bigl\| \nabla^{2}\gx \bigr\|
        &\leq
        N(\xo,\lambda)
        \\
        \bigl\| \gx \bigr\|
        &\leq
        \bigl\| \gx - \gxo \bigr\| + \bigl\| \gxo \bigr\|
        \\ &\leq
        M(\xo,\lambda) \| \bfx - \xo \| + \bigl\| \gxo \bigr\|
        \\ &\leq
        M(\xo,\lambda) \cdot R(\xo,\lambda) + \bigl\| \gxo \bigr\|,
        \\
        \bigl\| \Jgx \bigr\|
        &\leq
        \bigl\| \Jgx - \Jgxo \bigr\| + \bigl\| \Jgxo \bigr\|
        \\ &\leq
        N(\xo,\lambda) \| \bfx - \xo \| + \bigl\| \Jgxo \bigr\|
        \\ &\leq
        N(\xo,\lambda) \cdot R(\xo,\lambda) + \bigl\| \Jgxo \bigr\|,
        \\
        \bigl\| \nabla^{2} \bigl\langle \bfp, \gx \bigr\rangle \bigl\|
        &\leq
        N(\xo,\lambda) \| \bfp \|
        \\ &\leq
        N(\xo,\lambda) \cdot R(\xo,\lambda),
    \end{align*}
    for all $ (\bfx, \bfp) \in \widehat{\sD} $.
    Substituting these into the upper bound of $ \bigl\| \nabla^{2}\fhxp \bigr\| $
    proves our claim.
    \qedsymbol
\end{proof}

Note that we have deliberately omitted the term $ \po $ from 
Assumption~\ref{assumption: twice diff g}.
Hence, our forthcoming convergence results assume that GD begins at the point
$ (\xo,\zero) $ instead of the more general $ (\xo,\po) $.
Although the generalized case is derivable without significant modifications, 
we find that it obfuscates the interoperability and significance of our results.

Since the introductory paper by \citet{Polyak1963}, it has become increasing 
well known in machine-learning literature that GD enjoys a global linear 
convergence-rate under the conditions of smoothness and the PL inequality.
For example, \citet{oymak2019overparameterized} considered that path taken by 
GD under these assumptions.
Corollary~\ref{corollary: nlls} is the immediate result of applying \cref{theorem: invex-pl,theorem: lipschitz} to their findings.

\begin{figure*}[h!tb]
    \centering
    \includegraphics{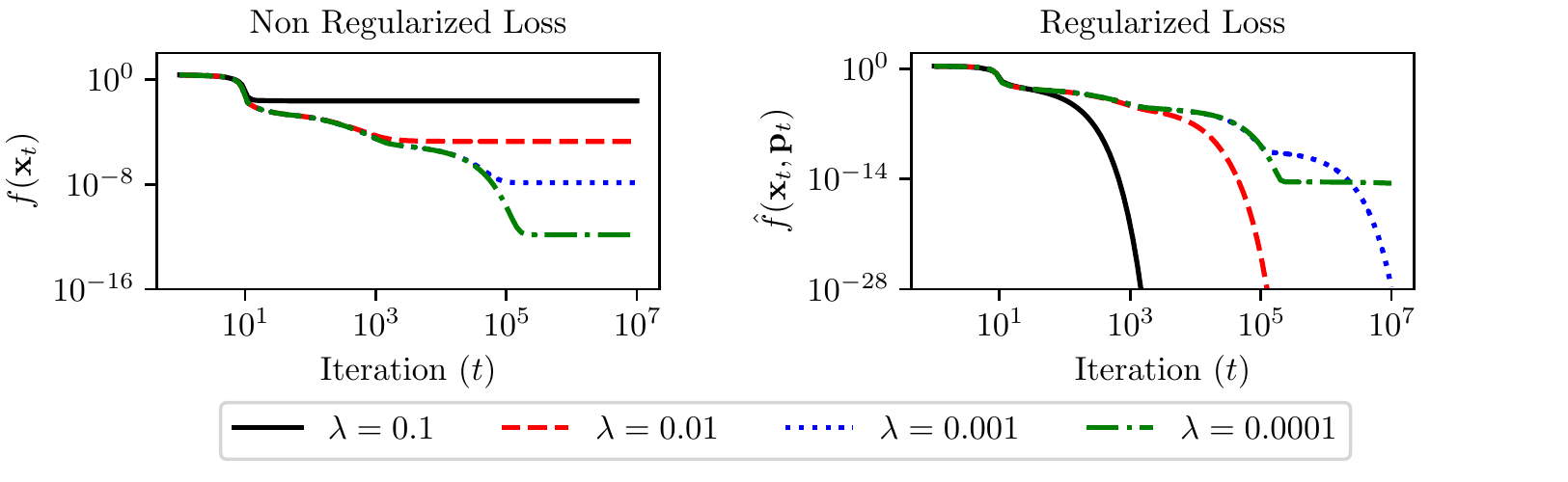}
    \caption{%
    The convergence of GD applied to our regularized function 
    \eqref{eq: fhxp} for several choices of $ \lambda $, on the problem of 
    binary classification with squared-loss.
    The non-regularized function value evaluated at the iterates is also plotted.
    We see that a smaller choice of $ \lambda $ relates to slower convergence in \eqref{eq: fhxp} and a lower non-regularized loss.
    Further study on this trade-off is left as future work.
    Note, the same step-size is used for all runs.
    }
    \label{figure: sigmoid}
\end{figure*}

\newpage
\begin{corollary}\label{corollary: nlls}
    Under \cref{assumption: twice diff g}, 
    let $ R(\xo,\lambda) $ and $ L(\xo,\lambda) $ be as in \cref{theorem: lipschitz}.
    Starting from the point $ (\xo,\po) = (\xo,\zero) $ and using a fixed step-size 
    $ \alpha \leq 1 / L(\xo,\lambda) $, the iterates $ (\xt,\pt) $ obtained via the
    GD updates
    \begin{align*}
        \begin{bmatrix} \xtt \\ \ptt \end{bmatrix}
        =
        \begin{bmatrix} \xt \\ \pt \end{bmatrix}
        -
        \alpha\nabla\fhxpt,
    \end{align*}
    satisfy, for all $ t \geq 0 $,
    \begin{align*}
        \fhxpt \leq \biggl( 1 - \frac{\lambda^{2}}{L(\xo,\lambda)} \biggr)^{t} \fh(\xo,\zero),  \quad \text{and} \quad
        \sum_{t=0}^{\infty}
        \biggl(
            \begin{bmatrix} \xtt \\ \ptt \end{bmatrix}
            -
            \begin{bmatrix} \xt \\ \pt \end{bmatrix}
        \biggr) \leq R(\xo,\lambda).
    \end{align*}
\end{corollary}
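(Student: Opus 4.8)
The plan is to run the classical convergence argument for gradient descent under a Polyak--{\L}ojasiewicz condition together with a Lipschitz-continuous gradient, as in \citet{Polyak1963} and \citet{oymak2019overparameterized}, with one wrinkle: \cref{theorem: lipschitz} only guarantees $L(\xo,\lambda)$-smoothness of $\fh$ on the ball $\widehat{\sD}(\xo,\lambda)$, not globally. The argument must therefore be packaged as an induction certifying that every iterate stays inside $\widehat{\sD}(\xo,\lambda)$, so that $L(\xo,\lambda)$ may legitimately be used at each step, and the path-length bound is what feeds that induction. Throughout, write $\bfz_t = \begin{bmatrix}\xt\\\pt\end{bmatrix}$, so $\bfz_0 = (\xo,\zero)$, $\bfz_{t+1} = \bfz_t - \alpha\nabla\fhxpt$, and note $\fh(\xo,\zero) = \hf\|\gxo\|^2 = \fxo$.

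Granting, as the inductive hypothesis will provide, that $\bfz_0,\dots,\bfz_{t+1}\in\widehat{\sD}(\xo,\lambda)$: since this ball is convex the segment $[\bfz_t,\bfz_{t+1}]$ lies in it, so $L(\xo,\lambda)$-smoothness yields the descent inequality $\fhxptt \le \fhxpt - \alpha(1-\tfrac{\alpha L(\xo,\lambda)}{2})\|\nabla\fhxpt\|^2 \le \fhxpt - \tfrac{\alpha}{2}\|\nabla\fhxpt\|^2$, using $\alpha \le 1/L(\xo,\lambda)$. Substituting the PL inequality of \cref{theorem: invex-pl}, written as $\|\nabla\fhxpt\|^2 \ge 2\lambda^2\fhxpt$, gives $\fhxptt \le (1-\alpha\lambda^2)\fhxpt$, hence by iteration $\fhxpt \le (1-\alpha\lambda^2)^t\fh(\xo,\zero)$; for the largest admissible step-size $\alpha = 1/L(\xo,\lambda)$ this is the claimed geometric bound, and in particular $\fhxpt\to 0$.

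For the path-length estimate I would combine the per-step decrease with PL through the elementary inequality $\sqrt a - \sqrt b \ge \tfrac{a-b}{2\sqrt a}$ valid for $0\le b\le a$. Taking $a=\fhxpt$, $b=\fhxptt$, then inserting the descent inequality and the (globally valid) PL inequality at $\bfz_t$ gives $\sqrt{\fhxpt} - \sqrt{\fhxptt} \ge \tfrac{\alpha}{4\sqrt{\fhxpt}}\|\nabla\fhxpt\|^2 \ge \tfrac{\alpha\lambda}{2\sqrt 2}\|\nabla\fhxpt\|$, that is $\|\bfz_{t+1}-\bfz_t\| = \alpha\|\nabla\fhxpt\| \le \tfrac{2\sqrt 2}{\lambda}(\sqrt{\fhxpt} - \sqrt{\fhxptt})$. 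Summing over $t$ telescopes to $\sum_{t=0}^\infty \|\bfz_{t+1}-\bfz_t\| \le \tfrac{2\sqrt 2}{\lambda}\sqrt{\fh(\xo,\zero)} = \tfrac{2\sqrt 2}{\lambda}\sqrt{\fxo}$, which is $R(\xo,\lambda)$ (here $\fh(\xo,\zero)$ plays the role of the sub-optimality gap of $\fh$, and $\Delta_0 = \fxo$ in the interpolating regime $\min_\bfx\fx = 0$ central to the paper). That is the second displayed claim, and since $\|\xt - \xo\| \le \|\bfz_t-\bfz_0\| \le \sum_{s=0}^{t-1}\|\bfz_{s+1}-\bfz_s\| \le R(\xo,\lambda)$, it also keeps each $\xt$ inside $\sD(\xo,\lambda)$.

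The one genuinely delicate point — the main obstacle — is closing the induction, since the descent inequality at step $t$ was applied under the hypothesis $\bfz_{t+1}\in\widehat{\sD}(\xo,\lambda)$, which is exactly what must be established. I would handle this with a standard first-exit argument. Assume $\bfz_0,\dots,\bfz_t\in\widehat{\sD}(\xo,\lambda)$; the computations above then apply on each segment $[\bfz_s,\bfz_{s+1}]$ for $s<t$, so $\sum_{s=0}^{t-1}\|\bfz_{s+1}-\bfz_s\| \le \tfrac{2\sqrt 2}{\lambda}(\sqrt{\fh(\bfz_0)} - \sqrt{\fh(\bfz_t)})$. If $\bfz_{t+1}\notin\widehat{\sD}(\xo,\lambda)$, let $\bfz' = \bfz_t + \theta(\bfz_{t+1}-\bfz_t)$ with $\theta\in(0,1]$ be the first point where the segment meets the boundary sphere, so $[\bfz_t,\bfz']\subseteq\widehat{\sD}(\xo,\lambda)$ and $\|\bfz'-\bfz_0\| = R(\xo,\lambda)$. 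Repeating the step-$t$ computation on $[\bfz_t,\bfz']$ (legitimate, since that segment lies in the ball and PL holds everywhere) gives $\|\bfz'-\bfz_t\| \le \tfrac{2\sqrt 2}{\lambda}(\sqrt{\fh(\bfz_t)} - \sqrt{\fh(\bfz')})$, and chaining with the previous estimate yields $\|\bfz'-\bfz_0\| \le \tfrac{2\sqrt 2}{\lambda}(\sqrt{\fh(\bfz_0)} - \sqrt{\fh(\bfz')}) \le \tfrac{2\sqrt 2}{\lambda}\sqrt{\fxo} = R(\xo,\lambda)$, with equality forcing $\fh(\bfz')=0$; but then $\bfz'$ is a global minimizer, gradient descent is stationary there, so $\bfz_{t+1} = \bfz' \in \widehat{\sD}(\xo,\lambda)$ — contradiction. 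Hence $\bfz_{t+1}\in\widehat{\sD}(\xo,\lambda)$, the induction closes, and letting $t\to\infty$ delivers both conclusions. The remaining ingredients (the descent lemma on a convex set, the $\sqrt{\cdot}$ inequality) are routine.
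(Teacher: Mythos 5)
The paper offers no proof of this corollary at all --- it is dispatched with a citation to \citet{Polyak1963} and \citet{oymak2019overparameterized} as ``the immediate result'' of combining Theorems~\ref{theorem: invex-pl} and~\ref{theorem: lipschitz} with their findings. Your reconstruction is exactly the intended argument (descent lemma, PL contraction, telescoped path length via $\sqrt{a}-\sqrt{b}\ge (a-b)/(2\sqrt{a})$), and you deserve credit for making explicit the localization induction that the citation silently absorbs: since Theorem~\ref{theorem: lipschitz} only gives $L(\xo,\lambda)$-smoothness on $\widehat{\sD}(\xo,\lambda)$, a first-exit argument really is needed to license the descent lemma at each step, and the paper never acknowledges this. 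You are also right to flag the two mismatches with the statement as printed: the telescoped sum yields $\tfrac{2\sqrt{2}}{\lambda}\sqrt{\fh(\xo,\zero)}=\tfrac{2\sqrt{2\fxo}}{\lambda}$, which equals $R(\xo,\lambda)=\tfrac{2\sqrt{2\Delta_0}}{\lambda}$ only when $\min_\bfx \fx = 0$ (otherwise $\Delta_0<\fxo$ and both the path-length claim and the confinement to $\widehat{\sD}$ need the radius enlarged); and the displayed rate $(1-\lambda^2/L)^t$ follows from $(1-\alpha\lambda^2)^t$ only at $\alpha=1/L$. Both are imprecisions of the paper, not of your argument.

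The one step of yours that does not hold up is the closure of the first-exit case. From $\fh(\bfz')=0$ you conclude ``gradient descent is stationary there, so $\bfz_{t+1}=\bfz'$,'' but the step is taken at $\bfz_t$, not at $\bfz'$: stationarity of an interior point of the segment $[\bfz_t,\bfz_{t+1}]$ says nothing about where that step lands, so as written the contradiction is not obtained. The case is still vacuous, but for a different reason: if $\|\bfz'-\bfz_0\|$ attains the value $R(\xo,\lambda)=\tfrac{2\sqrt2}{\lambda}\sqrt{\fh(\bfz_0)}$, then every link in the chain $\|\bfz'-\bfz_0\|\le\sum_{s<t}\|\bfz_{s+1}-\bfz_s\|+\|\bfz'-\bfz_t\|\le\tfrac{2\sqrt2}{\lambda}\bigl(\sqrt{\fh(\bfz_0)}-\sqrt{\fh(\bfz')}\bigr)\le\tfrac{2\sqrt2}{\lambda}\sqrt{\fh(\bfz_0)}$ must be an equality; equality in $\sqrt{a}-\sqrt{b}\ge(a-b)/(2\sqrt{a})$ forces $a=b$, i.e.\ $\fh$ constant along the trajectory, which combined with the contraction $\fh(\bfz_{s+1})\le(1-\alpha\lambda^2)\fh(\bfz_s)$ forces $\fh(\bfz_t)=0$, hence $\nabla\fhxpt=\zero$ and $\bfz_{t+1}=\bfz_t\in\widehat{\sD}(\xo,\lambda)$, the desired contradiction. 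With that repair (and the caveat about $\Delta_0$ versus $\fxo$), your proof is complete and considerably more rigorous than what the paper provides.
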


In \cref{corollary: nlls}, we have the global linear convergence-rate 
of GD on our regularized function \eqref{eq: fhxp}, for any $ \lambda > 0$.
Moreover, the total length of the path taken by the iterates of GD never 
exceeds $ R(\xo,\lambda) $, i.e., the iterates remain inside the ball $ \widehat{\sD}(\xo,\lambda) $.

Perhaps, one might also wonder how the solution $ \xt $ obtained from applying GD to the regularized problem \cref{eq: fhxp} relates to the non-regularized problem \cref{eq: fx}?
For iterates of GD, we have $\ptt =
    \po - \alpha \lambda \sum_{ i=0 }^{ t } 
    \bigl( \bfg(\bfx_{i}) + \lambda \bfp_{i} \bigr)$.
Using $ \po = \zero $ and $ \alpha \leq 1 / L(\xo,\lambda) $, \cref{corollary: nlls} implies
\begin{align*}
    \| \ptt \|
    &=
    \alpha \lambda \bigl\| \gxo \bigr\|
    \sum_{ i=0 }^{ t } \biggl( 1 - \frac{\lambda^{2}}{L(\xo,\lambda)} \biggr)^{ i / 2 }
    \\ &\leq
    \frac{ \alpha \lambda }{ 1 - \sqrt{ 1 - \lambda^{2} / L(\xo,\lambda) } }
    \bigl\| \gxo \bigr\| 
    \\ &=
    \alpha L(\xo,\lambda) 
    \Bigl( 1 + \sqrt{ 1 - \lambda / L(\xo,\lambda) } \Bigr)
    \bigl\| \gxo \bigr\|/\lambda
    \\ &\leq
    \Bigl( 1 + \sqrt{ 1 - \lambda / L(\xo,\lambda) } \Bigr) \bigl\| \gxo \bigr\|/\lambda.
\end{align*}
Now, it follows that
\begin{align}\label{eq: regularization bias}
    \bigl\| \gxt \bigr\|
    &\leq
    \sqrt{ 1 - \lambda / L(\xo,\lambda) }^{ \; t } \bigl\| \gxo \bigr\|
    +
    \lambda \| \pt \|
    \nonumber \\ &\leq
    \sqrt{ 1 - \lambda / L(\xo,\lambda) }^{ \; t } \bigl\| \gxo \bigr\|
    + 
    \Bigl( 1 + \sqrt{ 1 - \lambda / L(\xo,\lambda) } \Bigr)
    \bigl\| \gxo \bigr\|,
\end{align}
for all iterations $ t \geq 0 $. Although worst-case and pessimistic, this bound gives a qualitative, albeit very loose, guide to the effect of $ \lambda $ in training. It appears that  smaller values of $ \lambda $ amount to smaller regularization bias, at the cost of slower convergence for GD, and vice-versa. This observation is also supported by our numerical simulations; see \cref{figure: sigmoid}. This, in
some very loose sense, is reminiscent of the classical bias-variance trade-off from statistics. Further invesigation of the role of $ \lambda $ in this context is left for future work.

\section{Experiments}

\begin{figure*}[h!tb]
    \centering
    \includegraphics{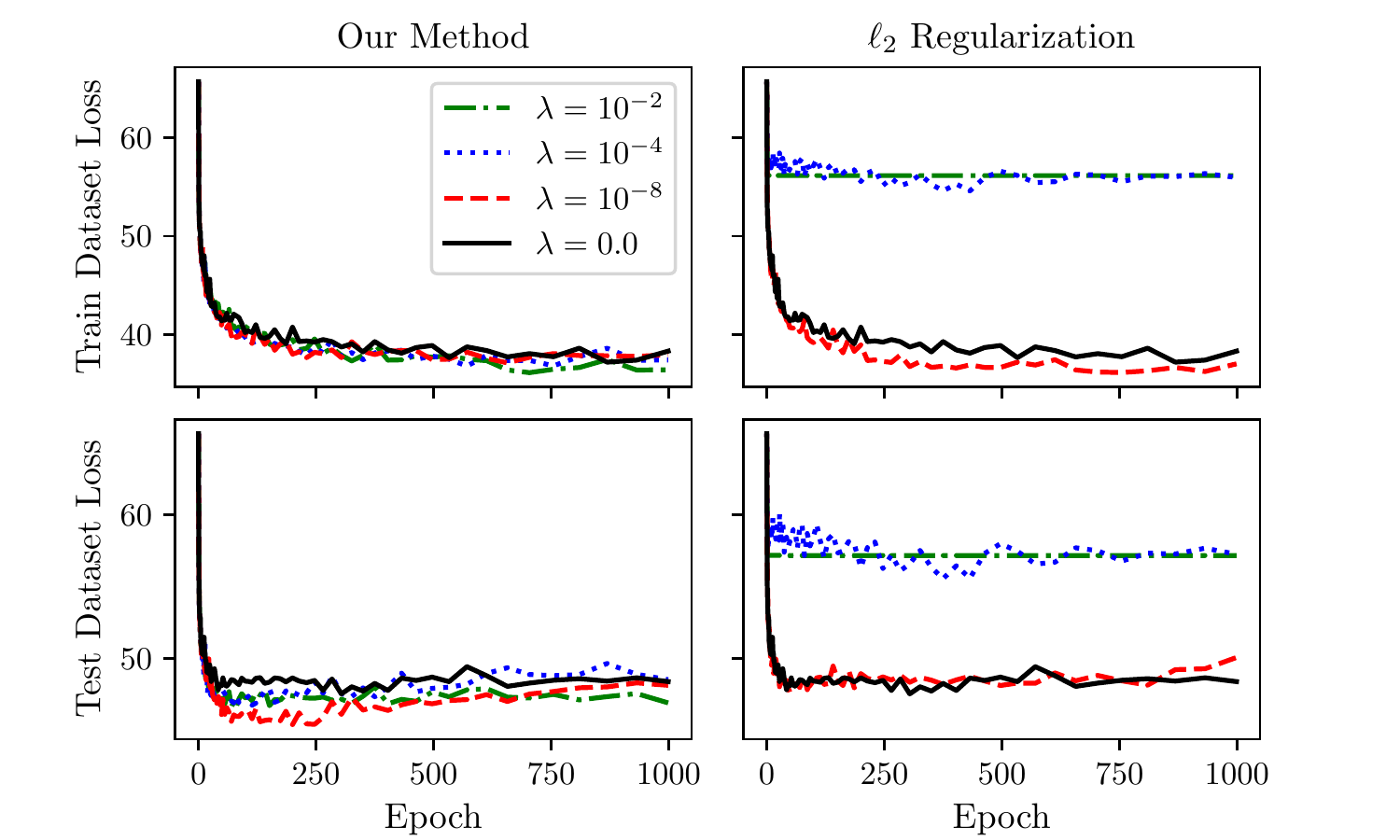}
    \caption{%
    Results of training a VAE model 
    on $ 16\times16 $ real-world handwritten digits from \cite{buscema1998metanet}.
    Here, we plot the non-regularized function value on the seen training dataset and unseen test dataset.
    Our method performs more consistently across all values of $\lambda$, 
    and with $ \lambda = 0.01 $ it yields the best non-regularized function values.
    This includes the run where $ \lambda = 0 $, i.e., 
    where the non-regularized function \eqref{eq: fx} is optimized directly.
    }
    \label{figure: VAE}
\end{figure*}
\begin{table*}[h!tb]
    \caption{Performance of trained VAE models after 1000 epochs. This is measured by the non-regularized function value on the seen training dataset and unseen test dataset, and the FID and KID scores using model's output images. Lower values are better, with the best value presented in boldface. Our method with $\lambda=10^{-2}$ attained the best value in each metric.}
    \label{table: vae}
    \vspace{1em}
    \centering
    \begin{tabular}{lccccccc}
        \toprule
        & 
        & \multicolumn{3}{c}{Our Method}
        & \multicolumn{3}{c}{$\ell_2$ Regularization}
        \\
        \cmidrule(lr){3-5}
        \cmidrule(lr){6-8}
        & $\lambda=0$
        & $\lambda=10^{-2}$
        & $\lambda=10^{-4}$
        & $\lambda=10^{-8}$
        & $\lambda=10^{-2}$
        & $\lambda=10^{-4}$
        & $\lambda=10^{-8}$
        \\
        \midrule
        Train Data Loss
        & $ 38.31 $
        & $ \mathbf{36.42} $
        & $ 37.40 $
        & $ 37.94 $
        & $ 56.15 $
        & $ 55.96 $
        & $ 37.02 $
        \\
        Test Data Loss
        & $ 48.42 $
        & $ \mathbf{46.94} $
        & $ 48.55 $
        & $ 48.16 $
        & $ 57.15 $
        & $ 57.24 $
        & $ 50.09 $
        \\
        FID Score
        & $ 402.8 $
        & $ \mathbf{376.0} $
        & $ 391.7 $
        & $ 390.2 $
        & $ 414.9 $
        & $ 410.2 $
        & $ 380.3 $
        \\
        KID Score
        & $ 0.534 $
        & $ \mathbf{0.478} $
        & $ 0.520 $
        & $ 0.512 $
        & $ 0.577 $
        & $ 0.627 $
        & $ 0.504 $
        \\
        \bottomrule
    \end{tabular}
\end{table*}
\begin{figure}[h!tb]
    \centering
    \includegraphics{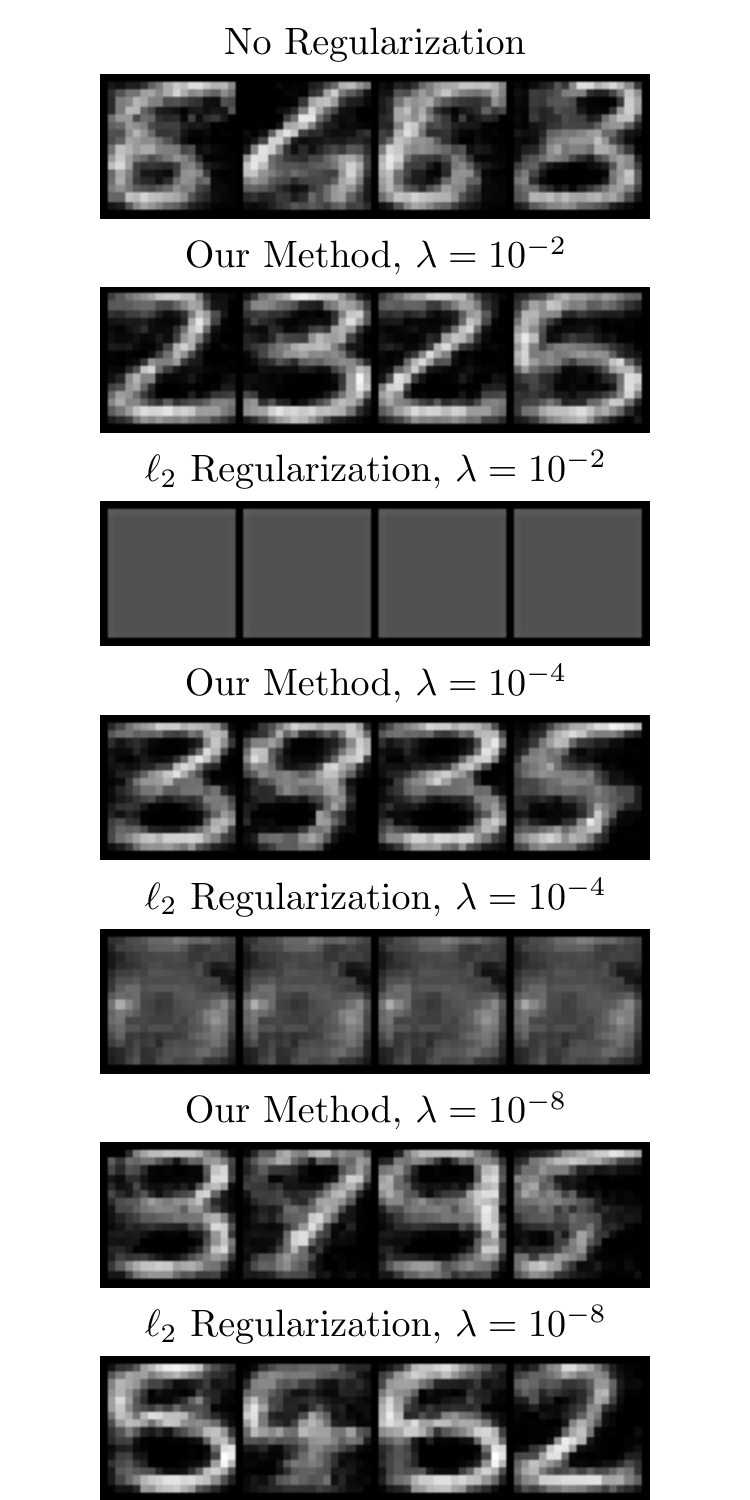}
    \caption{%
    VAE generated images after 1000 epochs.
    }
    \label{figure: VAE digits}
\end{figure}

\begin{figure*}[tb]
    \centering
    \includegraphics[scale=1]{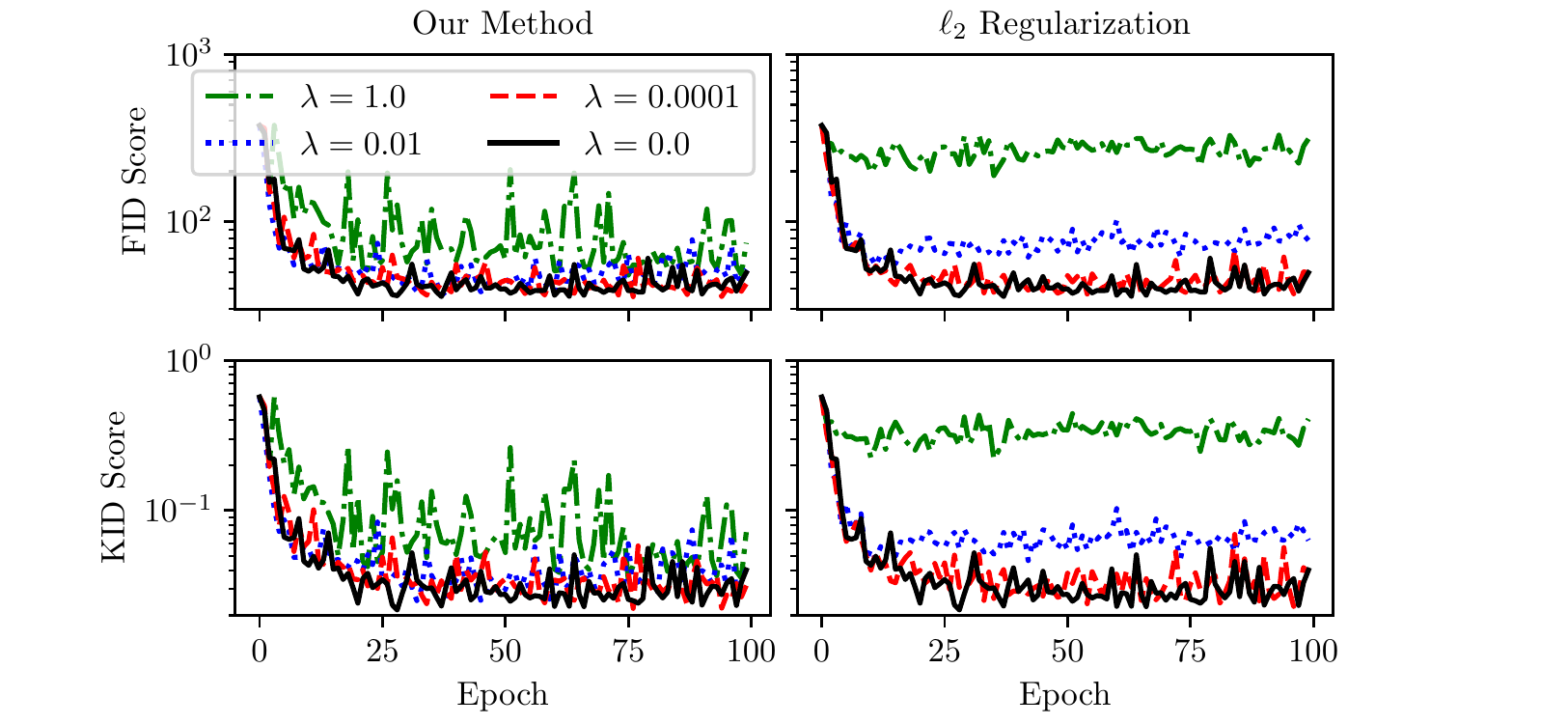}
    \caption{%
    Training a DCGAN on $ 64 \times 64 $ images 
    from the LFW dataset.
    Our method performs more consistently across all values of $\lambda$, and with $ \lambda = 0.0001 $ it yields the best 
    FID and KID scores amongst all runs.
    This includes the run where $ \lambda = 0 $, i.e., where \eqref{eq: fx}
    is optimized directly.
    Note that the vertical axes are logarithmic.
    }
    \label{figure: dcgan}
\end{figure*}
\begin{figure}[h!tb]
    \centering
    \includegraphics[scale=1.1]{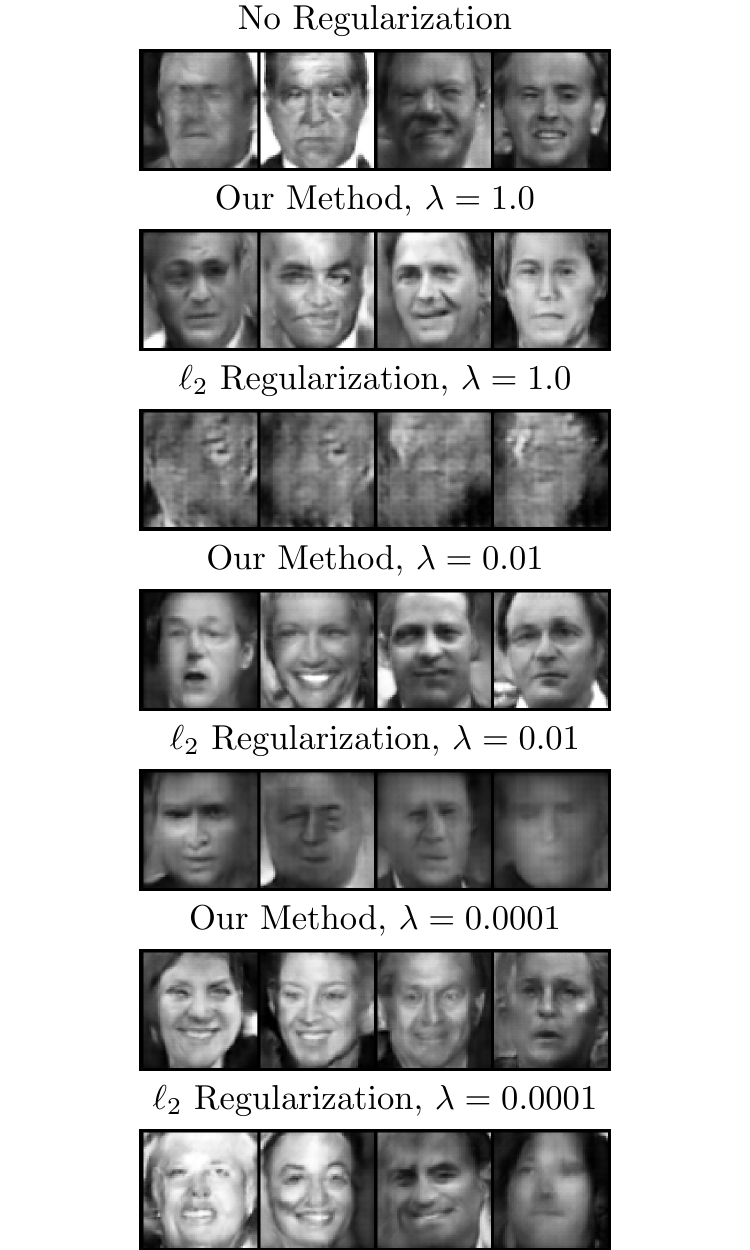}
    \caption{%
    DCGAN generated images after 100 epochs.
    }
    \label{figure: imgs}
\end{figure}

In this section, we examine the empirical performance of our proposed method in 
comparison to $ \ell_2 $-regularization.
To align with our theoretical results, we initialize $ \bfp $ with zeros in all 
experiments, i.e., $ \po = \zero $.
We will begin with, for illustrative purposes, a simple toy example of 
binary classification with squared-loss on synthetic data.
Afterwards, we consider more realistic and challenging examples of 
generative models.
All experiments are deterministic.
Namely, all models, in a given experiment, are initialized from the same weights.
Moreover, generated images are from the same random latent vector across 
regularization techniques.

For the generative models, we compute their
Fréchet Inception Distance (FID) score \citep{heusel2017gans}
and Kernel Inception Distance (KID) score \citep{binkowski2018demystifying}.
These are commonly used and well regarded metrics for assessing generative 
models.
We compute the score between the set of unseen real images from the test dataset and an equally sized set of generated fake images.
Note, these fake images are generated from the same latent vectors across 
the training epochs and regularization techniques.
Lower FID and KID scores are better.

\subsubsection*{Binary Classification With Squared-loss}

In \cref{figure: sigmoid}, and to verify our theory, we consider \eqref{eq: fx} with $ \fx = \|\sigma(\bfA\bfx)-\bfb\|^{2} / 2 $, 
where $\sigma$ denotes the element-wise sigmoid function, 
$ \bfA \in \real^{64\times 8} $, and $ \bfb $ has elements from $ \{0,1\} $.
The data $ \bfA $ and $ \bfb $ are synthetic.
GD is used with the same step-size across all runs.
Here, we observe that smaller values of $\lambda$ in \eqref{eq: fhxp} 
result in convergence to points that return smaller function values of 
the original non-regularized objective function 
\eqref{eq: fx}, i.e., a smaller regularization bias.
However, this comes at the cost of slower convergence rate. 
Indeed, increasing $\lambda$ amounts to a larger PL constant, 
which in turn results in faster convergence for GD on \eqref{eq: fhxp}.
The interpolation property of \cref{eq: fhxp}, to numerical accuracy, can be seen for all values of $\lambda$.

\subsubsection*{Variational Auto-encoder}

In Figures~\ref{figure: VAE} and \ref{figure: VAE digits}, 
we train a deep variational auto-encoder (VAE) model \citep{kingma2013auto} 
on $ 16\times16 $ real-world handwritten digits from \cite{buscema1998metanet}.
For this class of models, we use the commonly implemented training procedure 
that was introduced by \citet{radford2015unsupervised}.
Namely, to use the Adam optimizer \citep{kingma2014adam} 
with the momentum term $ \beta_1 = 0.5 $.
We also use a learning rate of $ 0.001 $ and batch size of $ 1 $.
Relative to $ \ell_2 $-regularization in Figure~\ref{figure: VAE}, 
our method is far more consistent between different values of $\lambda$.
This, yet again, offers a highly desirable practical advantage.
Indeed, the reduced difficulty of hyper-parameter tuning may amount to considerable cost savings over time in a practical setting.
As indicated by Table~\ref{table: vae}, our method obtains the model with the 
lowest non-regularized function value on the seen training dataset 
and unseen test dataset, as well as the best final FID and KID scores.
Some output images are shown in Figure~\ref{figure: VAE digits}.
Our approach yields sharper and more realistic images with less 
artifacts, compared with the alternative.

\subsubsection*{Deep Convolutional GAN}

Finally, in \cref{figure: dcgan,figure: imgs}, 
we consider the challenging problem of training a 
deep convolutional generative adversarial network (DCGAN) 
\citep{radford2015unsupervised} on $ 64 \times 64 $ real-world images 
from the \texttt{Labeled Faces in the Wild} (LFW) dataset \citep{LFWTech}.
For this class of models, we use the commonly implemented training procedure 
that was introduced by \citet{radford2015unsupervised}.
Namely, to use the Adam optimizer \citep{kingma2014adam} 
with the momentum term $ \beta_1 = 0.5 $.
We also use a learning rate of $ 0.0001 $ and batch size of $ 1 $.
As was the case of the VAE, our method is far more consistent 
than $ \ell_2 $-regularization between different values of $ \lambda $.
Moreover, our approach yields more realistic and consistent images, with less 
artifacts, compared with the alternative in Figure~\ref{figure: imgs}.

\section*{Acknowledgements}
Both authors gratefully acknowledge the generous support by the Australian Research Council Centre of Excellence for Mathematical \& Statistical Frontiers (ACEMS). Fred Roosta was partially supported by the ARC DECRA Award (DE180100923). This material is based on research partially sponsored by DARPA and the Air Force Research Laboratory under agreement number FA8750-17-2-0122. The U.S. Government is authorized to reproduce and distribute reprints for Governmental purposes notwithstanding any copyright notation thereon. The views and conclusions contained herein are those of the authors and should not be interpreted as necessarily representing the official policies or endorsements, either expressed or implied, of DARPA and the Air Force Research Laboratory or the U.S. Government.

\bibliographystyle{plainnat}
\bibliography{bibliography}

\end{document}